\documentclass[11pt,a4paper, reqno]{amsart}
\usepackage[T1]{fontenc}
\usepackage[utf8]{inputenc}
\usepackage[english]{babel}
\usepackage{amsmath,amssymb,amsthm,mathtools}
\usepackage[margin=25mm]{geometry}
\usepackage{microtype}
\usepackage[unicode,colorlinks=true,allcolors=blue]{hyperref}
\hypersetup{
  pdftitle={Subgroups of free groups with the same derived subgroup},
  pdfauthor={Artur Tursunbaev}
}
\makeatletter
\def\@settitle{\begin{center}\normalfont\large\bfseries\@title\end{center}}
\makeatother
\numberwithin{equation}{section}
\newtheorem{theorem}{Theorem}[section]
\newtheorem{lemma}[theorem]{Lemma}
\newtheorem{proposition}[theorem]{Proposition}
\newtheorem{corollary}[theorem]{Corollary}
\theoremstyle{definition}

\theoremstyle{remark}

\newcommand{\Z}{\mathbb Z}
\newcommand{\Q}{\mathbb Q}
\newcommand{\core}{\operatorname{core}}
\newcommand{\Norm}{\operatorname{N}}
\newcommand{\rank}{\operatorname{rank}}
\newcommand{\Roots}{\mathcal R}
\title[Subgroups with the same derived subgroup]
{Subgroups of free groups with the same derived subgroup}

\author{Artur Tursunbaev}

\address{St.~Petersburg State University,
Universitetskii pr.~28,
St.~Petersburg 198504, Russia}

\email{\href{mailto:tursunbaev.art@gmail.com}{tursunbaev.art@gmail.com}}

\date{}
\begin{document}
\begin{abstract}
We prove that any two subgroups of a free group with the same nontrivial
commutator subgroup are equal. This yields an affirmative answer to Shpilrain's Problem F15: normality of $[R,R]$ in a free group $F$ implies normality of the noncyclic subgroup $R$. No restrictions on ranks or indices are imposed. 

\end{abstract}
\maketitle

\section{Introduction}

In 1990, V.\,E.~Shpilrain posed, in the 11th edition of
\emph{The Kourovka Notebook}, the question of whether normality of
the commutator subgroup of a noncyclic subgroup of a free group implies
normality of the subgroup itself; see~\cite[Problem~11.124]{Kourovka}.
This question was subsequently restated as Problem~F15
in the book by Kapovich, Myasnikov, and Shpilrain
\cite[Problem~F15, p.~10]{KMS}.

The motivation comes from the classical Auslander--Lyndon theorem:
if $N$ and $M$ are normal subgroups of a nonabelian free group
and $N'\le M'$, then $N\le M$.
The normality assumption is essential here:
Dunwoody~\cite[p.~153]{Dun} constructed subgroups
$N,M\le F$ such that $M\triangleleft F$, $N'\le M'$, but
$N\nleq M$. Thus, the normality assumption on $N$ cannot
simply be omitted from the Auslander--Lyndon theorem.

In this context, Shpilrain's problem has the following precise formulation:
if $F$ is a noncyclic free group, $R\le F$ is
a noncyclic subgroup, and
\[
R'\triangleleft F,
\]
must we have
\[
R\triangleleft F?
\]

In this paper, we answer this question in the affirmative.
Throughout, we use the convention
\[
[a,b]=a^{-1}b^{-1}ab,
\qquad
H'=[H,H].
\]

\begin{theorem}\label{thm:main}
Let $F$ be a free group of arbitrary rank, and let $H,K\le F$. If
\[
H'=K'\ne1,
\]
then $H=K$.
\end{theorem}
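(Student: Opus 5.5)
The plan is to prove $H\le K$; the reverse inclusion then follows by symmetry. Put $N:=H'=K'\neq 1$ and $L:=\langle H,K\rangle\le F$. Both $H$ and $K$ normalize $N$, since $N$ is the derived subgroup of each of them. Hence $N\trianglelefteq L$, and $L$ is free by Nielsen--Schreier. Note also that $N\ne1$ forces $H$ and $K$ to be noncyclic free groups.

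Pass to the quotient $Q:=L/N$. In $Q$ the images $A:=H/N$ and $B:=K/N$ are abelian subgroups, and $Q$ is generated by $A\cup B$. The goal becomes $A=B$ inside $Q$, because $H$ and $K$ are the full preimages of $A$ and $B$.

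To compare $A$ and $B$, I would use the relation module $M:=N/N'$. Conjugation in $L$ makes $M$ a $\Z Q$-module, and it is the relation module of the free presentation $1\to N\to L\to Q\to1$. The same abelian group $M=H'/H''$ is also the relation module of the free presentation $1\to H'\to H\to A\to 1$. Restricting the $Q$-action to $A$ therefore gives two descriptions:
\begin{itemize}
\item from $L$, the restriction of the relation module of $Q$;
\item from $H$, the Lyndon exact sequence $0\to M\to \Z A^{\,r}\to \Z A\to\Z\to0$ with $r=\rank H$, which also makes sense for infinite rank.
\end{itemize}
The same two descriptions hold with $B$ and $K$ in place of $A$ and $H$.

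The key invariant I would extract is the following. For an element $q\in Q$ of infinite order, or more generally for a subgroup $C\le Q$, consider the restriction of $M$ to $\Z C$. Following the Auslander--Lyndon argument, the aim is to show that $C\le A$ holds exactly when this restriction has some module-theoretic property $P$ that depends only on $M$ and $Q$. A natural candidate for $P$ is that $M$ lies in a specific syzygy position over $C$, which amounts to the vanishing of a suitable $\mathrm{Ext}$ or $H_2$ term for the pair $(Q,C)$. Because $P$ does not refer to $H$, the same characterization applied to $K$ would give $C\le B$ if and only if $C$ has property $P$, and then $A=B$.

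Concretely, I would test this for $C=\langle q\rangle$ with $q\in B$, or $C=\langle a,q\rangle$ with $a\in A$, and aim for the following contradiction. If $q\notin A$, then $\langle A,q\rangle$ is a nonabelian subgroup of $Q$. The Auslander--Lyndon theorem shows that the elements of $L$ acting trivially on $N/N'$ are exactly those of $N$, so the centralizer in $Q$ of $M$ is trivial. Combining this with the Lyndon sequence for $H$, I would try to show that the fixed points and coinvariants of $M$ under $\langle A,q\rangle$ cannot match those computed from $K$. As a side case, if some element of $B$ lies outside $A$ but has a power in $A$, that case should be handled by a separate root-closure argument, using the fact that $A$ and $B$ are maximal abelian in the relevant torsion-free sense.

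The main obstacle is this characterization step: identifying which subgroups $C$ of $Q$ can be seen from $M$ alone, without reference to $H$. This is where the argument departs from Auslander--Lyndon, since $A$ need not be normal in $Q$; Dunwoody's example shows that without normality of $N$ such a recovery fails. Here $N$ is normal in $L$, which is what should make the $\Z Q$-module approach work. I expect the torsion and root phenomena in $Q$ to be the delicate part of that step.
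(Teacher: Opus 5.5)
Your setup is correct: $N\triangleleft L=\langle H,K\rangle$, $M=N/N'$ is the relation module both of $L\to Q$ and of $H\to A$, and Proposition~\ref{prop:AL}(a) makes $Q$ act faithfully on $M$. But the proposal stops where the proof has to begin. The property $P$ that is supposed to detect $C\le A$ is never specified, let alone proved.

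Worse, the strategy cannot work as formulated. Any property built from $(Q,M)$ and $C$ in an isomorphism-invariant way (an $\mathrm{Ext}$ or $H_2$ term, a syzygy type, fixed points, coinvariants) takes the same value on $C$ and on $g^{-1}Cg$ for every $g\in Q$. This is because $x\mapsto g^{-1}xg$, $m\mapsto mg$ is an automorphism of the pair $(Q,M)$ carrying $C$ to $g^{-1}Cg$. So a characterization ``$C\le A\iff P(C)$'' would force $A\triangleleft Q$, and likewise $B\triangleleft Q$, i.e.\ $H,K\triangleleft L$. At that point Proposition~\ref{prop:AL}(b) gives $H=K$ with no module theory at all. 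The deferred step is therefore not a technical lemma: it is equivalent to normality of $H$ in $\langle H,K\rangle$, which is the whole difficulty.

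Your side claim that $A$ and $B$ are maximal abelian in $Q$ is also unproven. If $C\supseteq A$ is abelian, its preimage $\tilde C\supseteq H$ satisfies $H'\le\tilde C'\le N=H'$. So maximality is exactly the special case $H\le K$ of the theorem, and your assertion that $\langle A,q\rangle$ is nonabelian for $q\notin A$ rests on it.

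The paper never characterizes $A$ inside $Q$. It instead manufactures an element of $K\setminus D$ that normalizes $H$, using three ingredients you have no counterpart for.
\begin{itemize}
\item An intersection dichotomy. If $c\in(H\cap K)\setminus D$, then $cD$ is central in $\langle H,K\rangle/D$. A Schreier-graph count then gives $[\langle H,K\rangle:H]\le\ell(c)$: the closed path reading $c$ has the same nonzero $1$-chain from every vertex, so every vertex is obtained from an endpoint of one fixed edge by reading the inverse of a prefix of $c$. A rational relation-module argument over the finite group $E/\core_E(H)$ shows that a finite-index subgroup with normal derived subgroup is normal. Proposition~\ref{prop:AL}(b) then yields $H=K$; otherwise $H\cap K=D$.
\item Finiteness of $\Roots(D)$. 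A finite set $T\subset F\setminus D$, read off from the path of some $d\in D\setminus D'$ in $\Gamma_D$, must meet every root, while distinct roots meet only in $D$.
\item A normalizing element. $\Norm_F(D)$ permutes the finite set $\Roots(D)$, so $q=k^m$ with $k\in K\setminus D$ normalizes $H$. Then $[h^m,q]\in H\cap K=D$ and torsion-freeness of $H/H'$ give $[H,q]\le H'$. Proposition~\ref{prop:AL}(a), applied in $\langle H,q\rangle$ where $H$ is now normal, forces $q\in H\cap K=D$, a contradiction.
\end{itemize}
Your application of Proposition~\ref{prop:AL}(a) to $N\triangleleft L$ is symmetric in $A$ and $B$ and so cannot separate them. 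The proposition becomes decisive only once an element of $K\setminus D$ normalizing $H$ is in hand, and producing that element is precisely what your plan lacks.
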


\begin{corollary}[F15]\label{cor:F15}
Let $F$ be a free group, and let $R\le F$ be a noncyclic subgroup. Then
\[
R'\triangleleft F\quad\Longrightarrow\quad R\triangleleft F.
\]
\end{corollary}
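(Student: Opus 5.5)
The plan is to derive the corollary directly from Theorem~\ref{thm:main}, using the fact that conjugation commutes with taking commutator subgroups. Fix $R\le F$ noncyclic with $R'\triangleleft F$, and an arbitrary $g\in F$. We will show $R^g=R$, where $R^g=g^{-1}Rg$.

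\textbf{Step 1 (conjugation and derived subgroups).} The map $x\mapsto g^{-1}xg$ is an automorphism of $F$ that sends $R$ onto $R^g$. It therefore sends commutators of elements of $R$ to commutators of elements of $R^g$, and the subgroups they generate correspond. Hence
\[
(R^g)'=(R')^g .
\]
Since $R'\triangleleft F$, we have $(R')^g=R'$, and so $(R^g)'=R'$.

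\textbf{Step 2 (nontriviality).} By the Nielsen--Schreier theorem, $R$ is free. Since $R$ is noncyclic, it has rank at least $2$ (rank $0$ or $1$ would make it trivial or infinite cyclic). Take two distinct basis elements $a,b$ of $R$. Then $[a,b]$ is a nontrivial reduced word in that basis, so $[a,b]\ne1$ and therefore $R'\ne1$. Equivalently, $R$ is a nonabelian free group, while abelian subgroups of free groups are cyclic.

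\textbf{Step 3 (apply the theorem).} Set $H=R$ and $K=R^g$. Steps 1 and 2 give $H'=K'\ne1$, so Theorem~\ref{thm:main} yields $R^g=R$. Since $g\in F$ was arbitrary, $R\triangleleft F$.

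There is no real obstacle here: all the depth lies in Theorem~\ref{thm:main}. The only points needing care are the equality $(R^g)'=(R')^g$ and the nontriviality of $R'$. The noncyclicity hypothesis is exactly what rules out $R'=1$. Without it the statement fails, since for instance $R=\langle x\rangle$ has $R'=1\triangleleft F$ while $R$ need not be normal.
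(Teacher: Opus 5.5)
Your proposal is correct and follows the paper's own derivation: conjugate $R$ by an arbitrary $g\in F$, use normality of $R'$ to get $(g^{-1}Rg)'=R'$, note that $R'\ne1$ because $R$ is free and noncyclic, and apply Theorem~\ref{thm:main} to $H=R$, $K=g^{-1}Rg$. Your extra detail on $(R^g)'=(R')^g$ and on nontriviality of $R'$ just spells out what the paper states in one line.
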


\begin{proof}[Derivation of the corollary from the theorem]
The subgroup $R$ is free and noncyclic, so $R'\ne1$.
For every $f\in F$, normality of $R'$ gives
\[
(f^{-1}Rf)'=f^{-1}R'f=R'.
\]
Theorem~\ref{thm:main} implies $f^{-1}Rf=R$.
Hence $R\triangleleft F$.
\end{proof}

Another immediate consequence of the main theorem is the equality
\[
\Norm_F(H')=\Norm_F(H)
\]
for every noncyclic subgroup $H$ of a free group $F$.

The main theorem can also be formulated as the assertion
that the map
\[
H\longmapsto H'
\]
is injective on the set of subgroups $H\le F$ with $H'\ne1$.

\section{Classical results and the finite-index case}

\begin{proposition}[Auslander--Lyndon]\label{prop:AL}
Let $E$ be a noncyclic free group.
\begin{itemize}
\item[(a)] If $1\ne N\triangleleft E$, then the action of $E/N$ on $N/N'$ by conjugation is faithful. In other words, if $g\in E$ and $[N,g]\le N'$, then $g\in N$.
\item[(b)] If $N_1,N_2\triangleleft E$ and $N_1'\le N_2'$, then $N_1\le N_2$. In particular, $N_1'=N_2'$ implies $N_1=N_2$.
\end{itemize}
\end{proposition}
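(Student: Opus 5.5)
Part (a) will be proved first; (b) then follows from it in a few lines. For (a), let $1\ne N\triangleleft E$ and $g\in E$ with $[N,g]\le N'$, and suppose $g\notin N$. The plan is to pass to the subgroup $M=\langle N,g\rangle$. This subgroup is free and contains $N$ as a normal subgroup, and $M/N$ is cyclic, generated by the image of $g$. By hypothesis $g$ acts trivially on $N/N'$, so $[N,M]\le N'$. Hence $N/N'$ is central in $M/N'$.

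Split into two cases according to the order of $gN$.

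If $gN$ has infinite order, then $M/N\cong\Z$. Since $N\ne1$ is normal in the free group $E$ and $N$ has infinite index in $M$, $N$ is an infinitely generated free group. Choose any $x\in N$, $x\ne1$. The elements $g^{-k}xg^{k}$ for $k\in\Z$ are pairwise distinct: otherwise some $g^{k}$ with $k\ne0$ commutes with $x$ in a free group, forcing $g^{k}$ and $x$ into a common cyclic subgroup, and then a power of $g$ lies in $N$, contradicting infinite order. Now use the structure of $N$ as the kernel of $M\to\Z$: by Reidemeister--Schreier, with transversal $\{g^k\}$ and $M$ free on $g$ together with elements of $N$ (possible after a Nielsen change of basis making $g$ primitive in $M$ modulo $N$), $N$ is free on the conjugates $g^{-k}yg^{k}$ of a basis $\{y\}$ of a complement. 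Then $N/N'$ is free abelian on these conjugates, and $g$ acts by shifting the index, which is not trivial. This contradicts triviality of the action. The delicate point in this case is arranging that $g$ can be taken as part of a basis. If this fails, I would instead use that $M/N'$ would be an extension with $N/N'$ central and $M/N\cong\Z$, hence nilpotent of class $\le2$. Since $M/N$ is cyclic it is abelian, so $M'\le N'$, and then a rank count on abelianizations, via the exact sequence $N/N'\to M/M'\to\Z\to0$, gives the contradiction.

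If $gN$ has finite order $n>1$, then $N$ has finite index $n$ in $M$, and the transfer argument applies. Since $N/N'$ is central in $M/N'$, the transfer $V\colon M\to N/N'$ satisfies $V(h)=h^{n}N'$ for $h\in N$. Write $\rank$ for the rank of the free groups involved and use Schreier's formula $\rank N-1=n(\rank M-1)$. Because $N/N'$ is central with finite cyclic quotient, $M/N'$ is abelian-by-finite with central $N/N'$, so $M'N'/N'$ is finite, by Schur's theorem. But $N/N'$ is torsion-free, so $M'\le N'$. Hence $\rank M\ge\rank N=n(\rank M-1)+1$. When $\rank M\ge2$ this gives $n\le1$, a contradiction. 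When $\rank M\le1$, $M$ is cyclic, and a nontrivial normal subgroup $N$ of the free group $E$ lying in a cyclic group forces $E$ to be cyclic, which is excluded.

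Part (b) is then formal. Let $N_1'\le N_2'$. If $N_2=1$, then $N_1'=1$, so $N_1$ is cyclic and normal in the noncyclic free group $E$, hence trivial. Otherwise take $g\in N_1$. Then
\[
[N_2,g]\le N_1\cap N_2,
\]
and by symmetry $[N_2,g]\le[N_2,N_1]$. The key extra input is the inclusion $[N_1,N_2]\le N_2'$. I would obtain it by applying (a) to $N_1\cap N_2$ and by using the commutation behaviour of $N_1/N_1'$. This is the step I expect to be the main obstacle: passing from information about the derived subgroups $N_1'\le N_2'$ to the statement that $N_1$ acts trivially on $N_2/N_2'$. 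Once $[N_2,g]\le N_2'$ is established, (a) gives $g\in N_2$, so $N_1\le N_2$. The final claim of (b) follows by applying this inclusion in both directions.
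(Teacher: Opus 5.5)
The paper does not prove this proposition; it cites Theorems 1 and 3 of Auslander--Lyndon. Your argument therefore has to stand on its own, and it does not: part (b) is not proved. The inclusion $[N_1,N_2]\le N_2'$ that you leave open is not an auxiliary step. Given (a), it is equivalent to $N_1\le N_2$, so your reduction is circular. Applying (a) to $N_1\cap N_2$ cannot supply it, because that would require $[N_1\cap N_2,g]\le(N_1\cap N_2)'$. This is an inclusion modulo a subgroup smaller than $N_2'$, and the hypothesis does not give it.

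What $N_1'\le N_2'$ actually gives is weaker. In $E/N_2'$ the images of $N_1$ and $N_2$ are abelian normal subgroups. So for $x\in N_1$ and $m\in N_2$, the commutator $[m,x]$ lies in $N_1\cap N_2$ and commutes with $x$ modulo $N_2'$, that is, $[[m,x],x]\in N_2'$. In module language, $(\bar x-1)^2$ annihilates $N_2/N_2'$.

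You therefore need a unipotent strengthening of (a): if $1\ne N\triangleleft E$ and $(\bar x-1)^2$ kills $N/N'$, then $x\in N$. This is more than (a) as stated, but it follows from the same computation:
\begin{itemize}
\item In $S=\langle N,x\rangle$, choose a basis $\{t\}\cup Y$ with $Y\subset N$. Do this by Nielsen moves on finitely many basis elements, then multiply the remaining ones by powers of $t$.
\item By Reidemeister--Schreier, $N/N'$ is a free $\Z[t^{\pm1}]$-module on $Y$ if $xN$ has infinite order, and is $\Z\oplus\bigoplus_{Y}\Z C_n$ if $xN$ has order $n>1$. Here $Y\ne\varnothing$ because $N\ne1$ and $E$ is noncyclic.
\item Write $x\equiv t^j\pmod N$ with $t^j\notin N$. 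Since $N$ acts trivially on $N/N'$, $x$ acts as $t^j$.
\item $(t^j-1)^2$ is a nonzero element of the domain $\Z[t^{\pm1}]$, respectively of $\Z C_n$. Hence it acts nontrivially on these nonzero free modules, a contradiction.
\end{itemize}

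Part (a) also has a rank gap. In the finite-order case you argue from Schreier's formula and $\rank M\ge\rank N$. If $\rank M$ is infinite, both sides are the same cardinal and there is no contradiction. Your fallback in the infinite-order case, a rank count along $N/N'\to M/M'\to\Z\to0$, fails for the same reason. This matters: the paper stresses that neither statement needs finite rank. It applies (a) to $E_0=\langle H,q\rangle$ and (b) to $E=\langle H,K\rangle$, and both can be infinitely generated.

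The computation above, with $t^j-1$ in place of $(t^j-1)^2$, proves (a) in both cases and in every rank. It also dissolves your ``delicate point''. You never need $g$ itself to be primitive, only a basis element $t$ with $tN$ generating $M/N$. Then $g$ and the corresponding power of $t$ differ by an element of $N$, which acts trivially on $N/N'$. Incidentally, $M'\le N'$ in your fallback holds because $M/N'$ is cyclic modulo a central subgroup and hence abelian, not merely because $M/N$ is abelian.
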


These are Theorems~1 and~3 of \cite[pp.~929 and~931]{AL}, respectively. Neither statement requires the rank to be finite.

\begin{lemma}\label{lem:finiteindex}
Let $E$ be a noncyclic free group, and let $H\le E$ satisfy $[E:H]<\infty$ and $D=H'\triangleleft E$. Then $H\triangleleft E$.
\end{lemma}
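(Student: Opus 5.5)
The plan is to recover $H$ from the pair $(E,D)$ by an intrinsic construction that is invariant under conjugation, and then to close the argument with Proposition~\ref{prop:AL}(a). Put $N=\core_E(H)=\bigcap_{g\in E}g^{-1}Hg$. Since $[E:H]<\infty$, $N$ is a normal subgroup of $E$ of finite index and $N\le H$. It is nontrivial, and in fact noncyclic, because it has finite index in the noncyclic free group $E$. Define
\[
C=\{\,g\in E:\ [N,g]\le D\,\}.
\]
Because $N$ and $D$ are both normal in $E$, $C$ is a subgroup. It is also normal in $E$, since conjugating the defining condition by $f\in E$ preserves both $N$ and $D$. Moreover $H\le C$: for $h\in H$ we have $[N,h]\le[H,H]=D$. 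Therefore it suffices to show $C=H$, since then $H=C\triangleleft E$.

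For the equality I would proceed in two steps. First I would show $C'=D$. Since $H\le C$, we get $D=H'\le C'$, so the work is to prove $C'\le D$, i.e.\ that $C/D$ is abelian. By construction the image $ND/D$ is central in $C/D$. It also has finite index there, because $[C:N]\le[E:N]<\infty$. Schur's theorem then gives that $(C/D)'=C'/D$ is finite. To conclude that $C'/D$ is trivial, I would show that $C/D$ has no nontrivial torsion inside $C'/D$. The route I would try is to use that $H/D$ is free abelian of finite index in $C/D$, together with the transfer $C/D\to H/D$: restricted to the central finite-index subgroup, the transfer is the $[C:H]$-th power map, and it kills the finite group $C'/D$. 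Combining this with unique extraction of roots in the free group $C$ should show that an element of $C'$ with a power in $D$ already lies in $D$.

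Second, once $C'=D=H'$, the subgroup $H$ contains $C'$. Hence $H\triangleleft C$, with abelian quotient $C/H$. For any $c\in C$ we have
\[
[H,c]\le C'=H'.
\]
Proposition~\ref{prop:AL}(a), applied in the noncyclic free group $C$ to the nontrivial normal subgroup $H$, then gives $c\in H$. So $C=H$, and $H\triangleleft E$ follows.

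The main obstacle is the first step, namely ruling out torsion in $C'/D$, i.e.\ proving $C'\le D$ rather than just $[C':D]<\infty$. The subgroup $D=H'$ is normal in $C$ but need not be root-closed, so the torsion-freeness does not come for free. If the transfer and root argument fails, I would instead replace $C$ by the subgroup of elements $g$ with $[N,g]\le D$ and $g^{k}\in H$ for some $k\ge1$, and rerun the Schur argument for that smaller subgroup.
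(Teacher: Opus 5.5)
\textbf{There is a genuine gap: your first step, $C'=D$, is essentially the whole lemma, and you give no argument for it.}

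The parts of your proposal outside that step are sound:
\begin{itemize}
\item $C$ is the preimage of the centralizer of $N/D$ in $E/D$, so it is a normal subgroup of $E$ containing $H$.
\item Schur's theorem gives $|C'/D|<\infty$.
\item Once $C'=D$, Proposition~\ref{prop:AL}(a) applied in $C$ yields $C=H$.
\end{itemize}

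Modulo Proposition~\ref{prop:AL}(a), the equality $C'=D$ is equivalent to the lemma itself. Indeed, if $H\triangleleft E$, then $N=H$, and Proposition~\ref{prop:AL}(a) in $E$ gives $C=H$. So the reduction does not make the problem easier.

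The tools you name cannot supply the missing argument:
\begin{itemize}
\item The transfer $C/D\to H/D$ takes values in an abelian group. It therefore kills $C'/D$ automatically and says nothing about whether $C'/D$ is trivial.
\item On $N/D$ the transfer is the $[C:H]$-th power map. This only recovers $C'\cap N=D$, which is already clear because $C'/D$ is finite and $N/D$ is torsion-free.
\item Unique extraction of roots in $C$ controls equalities $a^k=b^k$. What is at stake is whether $D$ is closed under taking roots inside $C'$, and unique roots say nothing about that.
\item Your fallback is vacuous. Since $C/N$ is finite, every $g\in C$ already has a power in $N\le H$, so the ``smaller'' subgroup is $C$ itself.
\end{itemize}

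In fact no argument built only from these ingredients can work. Take the Heisenberg group $E=\langle x,y,z\mid [x,y]=z,\ z\text{ central}\rangle$, which is torsion-free nilpotent and hence has unique roots, and let $H=\langle x,y^2\rangle$. Then:
\begin{itemize}
\item $[E:H]=4$, and $D=H'=\langle z^2\rangle$ is central, hence normal in $E$.
\item $H/D\cong\Z^2$ and $N=\langle x^2,y^2,z^2\rangle$.
\item $C=E$ and $C'/D\cong\Z/2$.
\item $H$ is not normal, since $y^{-1}xy=xz\notin H$.
\end{itemize}
Every tool you list is available here, yet $C'\neq D$. So the first step needs an input specific to free groups that sees the module structure of $N/N'$.

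The paper (whose $C$ is your $N$) supplies exactly this. Set $\Omega=E/N$, $P=H/N$, $V=(N/N')\otimes\Q$ and $W=(D/N')\otimes\Q$.
\begin{itemize}
\item The five-term exact sequence for $N\triangleleft H$ gives $W=VI_P$.
\item The relation module sequence shows that $V$ has a direct summand isomorphic to $\Q\Omega$; this is where noncyclicity of $E$ enters.
\item Projecting the submodule $W$ onto that summand shows that $\Q\Omega I_P$ is stable under right multiplication by $\Omega$, which forces $P\triangleleft\Omega$.
\end{itemize}
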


\begin{proof}
Set
\[
C=\core_E(H)=\bigcap_{g\in E}g^{-1}Hg,
\qquad \Omega=E/C,\qquad P=H/C.
\]
The group $\Omega$ is finite. Normality of $D$ and the inclusion $D\le H$ imply $D\le C$. Hence $C'\le D\le C$.

All modules in this proof are right modules. Consider
\[
V=(C/C')\otimes\Q,
\qquad W=(D/C')\otimes\Q\le V.
\]
Conjugation defines a right $\Q\Omega$-module structure on $V$, and normality of $D$ makes $W$ a submodule. For a subgroup $B\le\Omega$, let $I_B$ denote the augmentation ideal of $\Q B$.

The five-term exact sequence in homology \cite[p.~47]{Brown} for the extension
$C\triangleleft H$ yields an exact sequence
\[
H_2(P;\Q)\longrightarrow V_P\longrightarrow H_1(H;\Q)
\longrightarrow H_1(P;\Q)\longrightarrow0.
\]
Since $P$ is finite, both outer homology terms vanish, giving an isomorphism
\[
V_P=V/VI_P\ \cong\ H_1(H;\Q).
\]
The kernel of the homomorphism $C/C'\to H/H'$ is
\[
(C\cap H')/C'=D/C'.
\]
Since $\Q$ is a flat $\Z$-module, the kernel of the natural
map $V\to H_1(H;\Q)$ is $W$. On the other hand,
by the preceding isomorphism, this kernel is $VI_P$. Therefore,
\begin{equation}\label{eq:W}
W=VI_P.
\end{equation}

We show that $V$ contains a direct $\Q\Omega$-summand
isomorphic to $\Q\Omega$. Indeed, if $\rank E=n<\infty$, the relation module exact sequence for $C\triangleleft E$ has the form
\[
0\longrightarrow V\longrightarrow(\Q\Omega)^n\longrightarrow I_\Omega\longrightarrow0.
\]
A more general form of this sequence can be found, for example,
in Blackburn~\cite[p.~470]{Blackburn}.

The semisimplicity of $\Q\Omega$ and the decomposition $\Q\Omega\cong\Q\oplus I_\Omega$ give
\[
V\cong\Q\oplus(\Q\Omega)^{n-1}.
\]
Here $n\ge2$. If $\rank E=\infty$, choose a finite subset of a free
basis whose images generate $\Omega$, and one further basis
element $x$. Replacing $x$ by $xw^{-1}$ for a suitable word $w$
in the chosen finite subset yields a free basis
$X$ of $E$ containing an element $x\in C$.

For this basis, the relation module exact sequence has the form
\[
0\longrightarrow V\longrightarrow
\bigoplus_{y\in X}e_y\Q\Omega
\xrightarrow{\partial}I_\Omega\longrightarrow0,
\qquad
\partial(e_y)=\bar y-1.
\]
Since $x\in C$, we have $\partial(e_x)=0$, and hence
\[
V=e_x\Q\Omega\oplus
\left(
V\cap\bigoplus_{y\in X\setminus\{x\}}e_y\Q\Omega
\right).
\]
Thus, in this case as well, $V$ contains a direct
$\Q\Omega$-summand isomorphic to $\Q\Omega$.

In either case, choose a direct summand
$U\cong\Q\Omega$ and write
\[
V=U\oplus U_0.
\]
Let $\pi:V\to U$ denote the corresponding
$\Q\Omega$-linear projection. By~\eqref{eq:W}, we obtain
\[
\pi(W)=\pi(VI_P)=\pi(V)I_P=UI_P.
\]
Since $W$ is a $\Q\Omega$-submodule, its image $UI_P$
is stable under right multiplication by elements of $\Omega$. Under
the identification $U\cong\Q\Omega$, it follows that
$J=\Q\Omega I_P$ is also stable under right multiplication by
elements of $\Omega$.

As a $\Q$-vector space, $\Q\Omega/J$ has a basis
indexed by the left cosets $\omega P$.
Indeed, $J$ is spanned by all differences
$\omega p-\omega$, where $\omega\in\Omega$ and $p\in P$.
For $p\in P$ and $\omega\in\Omega$, stability of $J$ under
right multiplication by elements of $\Omega$ gives
\[
(p-1)\omega\in J,
\]
whence
\[
p\omega P=\omega P,
\qquad
\omega^{-1}p\omega\in P.
\]
Thus $P\triangleleft\Omega$, and hence $H\triangleleft E$.
\end{proof}

\section{Chains in Schreier graphs}

Fix a free basis $X$ of the free group $E$. For $H\le E$,
let $\Gamma_H$ denote the Schreier graph with vertices $Hg$ and
positively oriented edges
\[
Hg\xrightarrow{x}Hgx,
\qquad x\in X.
\]
For an oriented edge $e$, let $o(e)$ and $t(e)$ denote
its initial and terminal vertices, respectively.

Every word $w$ over the alphabet $X^{\pm1}$ determines a unique path
starting at each vertex of $\Gamma_H$. Reading a fixed word defines
a permutation of the vertex set of $\Gamma_H$.

Let $w\in H$. The corresponding path starting at the vertex $H$
is closed. Let
\[
\sigma_H(w)\in C_1(\Gamma_H;\mathbb Z)
\]
denote its $1$-chain, where $C_1(\Gamma_H;\mathbb Z)$ is the free abelian
group on the positively oriented edges of $\Gamma_H$.
The coefficient of an edge in $\sigma_H(w)$ is the number of times it is traversed
in the positive direction minus the number of times it is traversed
in the opposite direction. Free reduction of the word
does not change this chain.

For $u,v\in H$, we have
\[
\sigma_H(uv)=\sigma_H(u)+\sigma_H(v),
\]
so this defines a homomorphism
\[
\sigma_H:H\longrightarrow C_1(\Gamma_H;\mathbb Z).
\]

The standard identification
\[
\pi_1(\Gamma_H,H)\cong H
\]
and passage to the abelianization give
\begin{equation}\label{eq:sigma-kernel}
\ker\sigma_H=H'.
\end{equation}
Indeed, since $\Gamma_H$ is a graph,
\[
H_1(\Gamma_H;\mathbb Z)=Z_1(\Gamma_H;\mathbb Z)
\subseteq C_1(\Gamma_H;\mathbb Z),
\]
a closed path has zero homology class if and
only if its associated $1$-chain is zero.

\begin{lemma}[On a central coset]\label{lem:center}
Let $H\le E$, $D=H'\triangleleft E$, and suppose that there exists $c\in H\setminus D$ such that
\[
cD\in Z(E/D).
\]
Then $[E:H]<\infty$. More precisely, for any word of length $\ell$ representing $c$ with respect to the basis $X$, we have $[E:H]\le\ell$.
\end{lemma}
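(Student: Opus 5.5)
The plan is to turn the centrality of $cD$ into a statement about closed paths in the Schreier graph $\Gamma_H$. The argument will show that the path reading $c$ from \emph{any} vertex is closed and has the same $1$-chain as the path from $H$. Then a single fixed edge has to be visited by every one of these paths. Counting the positions in the word at which that can happen bounds the number of vertices by $\ell$.

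\emph{Step 1: all $c$-loops have the same chain.} Let $g\in E$. Centrality of $cD$ in $E/D$ gives $[c,g]\in D$, so $g^{-1}cg=c\,d$ with $d\in D\le H$. In particular $g^{-1}cg\in H$, and by~\eqref{eq:sigma-kernel} we have $\sigma_H(d)=0$, so
\[
\sigma_H(g^{-1}cg)=\sigma_H(c)+\sigma_H(d)=\sigma_H(c).
\]
Now read the word $g^{-1}cg$ from the vertex $H$. It first follows $g^{-1}$ to the vertex $v=Hg^{-1}$, then reads $c$, and then follows $g$ back. Since $g^{-1}cg\in H$, we have $vc=v$, so the middle part is a closed path at $v$. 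The $g^{-1}$ and $g$ segments retrace the same edges in opposite directions, so their contributions cancel. Hence the $1$-chain $\tau_v$ of the $c$-path starting at $v$ equals $\sigma_H(c)$. As $g$ ranges over $E$, the vertex $v$ ranges over all vertices of $\Gamma_H$. Finally, $c\notin D=H'=\ker\sigma_H$, so this common chain $\sigma_H(c)$ is nonzero.

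\emph{Step 2: injection into positions.} Fix a word $c=y_1\cdots y_\ell$ with $y_i\in X^{\pm1}$. Fix also a positively oriented edge $e$ whose coefficient in $\sigma_H(c)$ is nonzero. For each vertex $v$, the chain $\tau_v=\sigma_H(c)$ has nonzero coefficient on $e$, so the $c$-path from $v$ traverses $e$ at some step $i(v)\in\{1,\dots,\ell\}$. I claim $v\mapsto i(v)$ is injective.

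Traversing $e$ at step $i$ means the following. If $y_i\in X$, then the vertex reached after reading $y_1\cdots y_{i-1}$ is $o(e)$. If $y_i\in X^{-1}$, that vertex is $t(e)$. In both cases the letter $y_i$ determines which endpoint. Since reading the fixed word $y_1\cdots y_{i-1}$ is a permutation of the vertex set, that endpoint determines $v$ uniquely. So distinct vertices give distinct steps, and the number of vertices, which is $[E:H]$, is at most $\ell$.

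The only delicate point is Step 1: checking that conjugation by $g$ moves the base point of the $c$-loop without changing its chain. Once that is established, the counting in Step 2 is routine.
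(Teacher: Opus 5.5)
Your proof is correct and follows the paper's argument. The differences are cosmetic: the paper conjugates as $gcg^{-1}$ and reads $c$ from $Hg$, whereas you use $g^{-1}cg$ and $Hg^{-1}$. The paper also phrases the count as covering $V(\Gamma_H)$ by the at most $\ell$ vertices $o(e)u_{j-1}^{-1}$ and $t(e)u_{j-1}^{-1}$, which amounts to your injection $v\mapsto i(v)$.
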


\begin{proof}
For every $g\in E$, we have
\[
gcg^{-1}c^{-1}\in D=H'.
\]
Thus $gcg^{-1}\in H$, and the path corresponding to the word $c$
and starting at the vertex $Hg$ is closed. The path in $\Gamma_H$
corresponding to the word $gcg^{-1}$ and starting at $H$ has the form
\[
g\cdot c\cdot g^{-1},
\]
where the chains of the segments $g$ and $g^{-1}$ cancel each other.

Moreover, $gcg^{-1}$ and $c$ have the same image in $H/H'$.
Consequently, the chain of the closed path corresponding to the word $c$
and starting at an arbitrary vertex $Hg$ is
\[
\alpha=\sigma_H(c)\ne0.
\]
The fact that $\alpha\ne0$ follows from $c\notin H'$ and
\eqref{eq:sigma-kernel}.

Fix a positively oriented edge $e$ whose coefficient
in $\alpha$ is nonzero. Since the chain of every closed
path corresponding to the word $c$ is $\alpha$, each such path traverses
$e$ in one of the two directions.

Write
\[
c=s_1\cdots s_\ell,\qquad s_j\in X^{\pm1},
\]
and set $u_{j-1}=s_1\cdots s_{j-1}$. Let the edge $e$ be
labeled by $x\in X$. If the $j$-th step of the path starting at a vertex $v$ traverses $e$
in the positive direction, then $s_j=x$ and
\[
v u_{j-1}=o(e).
\]
If $e$ is traversed in the opposite direction, then
$s_j=x^{-1}$ and
\[
v u_{j-1}=t(e).
\]
Consequently,
\[
V(\Gamma_H)\subseteq
\{\,o(e)u_{j-1}^{-1}:1\le j\le\ell,\ s_j=x\,\}
\cup
\{\,t(e)u_{j-1}^{-1}:1\le j\le\ell,\ s_j=x^{-1}\,\}.
\]
Thus, each $j$ corresponds to at most one initial vertex,
and hence
\[
[E:H]=|V(\Gamma_H)|\le \ell.
\]
\end{proof}

\section{Intersection dichotomy}

\begin{lemma}\label{lem:intersection}
Let $H,K\le F$ and $H'=K'=D\ne1$. Then
\[
H=K \qquad\text{or}\qquad H\cap K=D.
\]
\end{lemma}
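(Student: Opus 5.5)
The plan is to pass to the subgroup generated by $H$ and $K$, where $D$ is normal, and to show that if $H\cap K$ is strictly larger than $D$, then an element of $(H\cap K)\setminus D$ gives a central coset. This lets us combine Lemma~\ref{lem:center}, Lemma~\ref{lem:finiteindex} and Proposition~\ref{prop:AL}(b).

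Set $E=\langle H,K\rangle\le F$. By the Nielsen--Schreier theorem, $E$ is free. Since $D=H'\ne1$, the group $H$ is noncyclic, so $E$ is noncyclic as well. The subgroup $D$ is normal in $H$, being its derived subgroup, and normal in $K$ for the same reason, since $D=K'$. Hence $D$ is normalized by a generating set of $E$, and so $D\triangleleft E$. From now on we work inside $E$ and fix a free basis $X$ of $E$, so that Schreier graphs are taken relative to $X$.

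Assume $H\cap K\ne D$. Since $D\le H\cap K$, we can choose $c\in(H\cap K)\setminus D$. In $E/D$, the images $H/D=H/H'$ and $K/D=K/K'$ are abelian subgroups, and $cD$ lies in both. Hence $cD$ commutes with every element of $H/D$ and of $K/D$. These images generate $E/D$, so $cD\in Z(E/D)$.

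Now $c\in H\setminus D$ with $D=H'\triangleleft E$. Lemma~\ref{lem:center}, applied in $E$, gives $[E:H]<\infty$. Symmetrically, since $c\in K\setminus D$ with $D=K'\triangleleft E$, it gives $[E:K]<\infty$. Lemma~\ref{lem:finiteindex}, applied to the noncyclic free group $E$, then yields $H\triangleleft E$ and $K\triangleleft E$. Finally, Proposition~\ref{prop:AL}(b) in $E$, with $N_1=H$ and $N_2=K$ and $H'=K'$, gives $H=K$. This proves the dichotomy.

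The only point needing care is the setup. We must replace $F$ by $E=\langle H,K\rangle$ to obtain normality of $D$, since $D$ need not be normal in $F$. We must also check that $E$ is free and noncyclic, so that the earlier results, which are stated for an arbitrary free group with a fixed basis, apply in $E$. Once this is done, the centrality of $cD$ is immediate, and the rest is a direct chain of the three earlier results.
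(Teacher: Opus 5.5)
Your proposal is correct and follows the paper's proof essentially step for step: pass to $E=\langle H,K\rangle$, where $D$ is normal, and take $c\in(H\cap K)\setminus D$, so that $cD$ is central in $E/D$. Then apply Lemma~\ref{lem:center}, Lemma~\ref{lem:finiteindex} and Proposition~\ref{prop:AL}(b); your check that $E$ is free and noncyclic, and your choice of a basis of $E$ for the Schreier graphs, only make explicit what the paper leaves implicit.
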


\begin{proof}
The subgroup $D$ is normal in both $H$ and $K$, so
\[
D\triangleleft E:=\langle H,K\rangle.
\]

Suppose that there exists
$c\in(H\cap K)\setminus D$.
The subgroups $H/D$ and $K/D$ are abelian and generate $E/D$.
Since $cD$ belongs to both, it centralizes $H/D$ and $K/D$,
and hence
\[
cD\in Z(E/D).
\]

Lemma~\ref{lem:center}, applied to $H\le E$ and $K\le E$,
gives
\[
[E:H]<\infty,\qquad [E:K]<\infty.
\]
By Lemma~\ref{lem:finiteindex}, both subgroups are normal in $E$.
Then Proposition~\ref{prop:AL}(b), applied to
$H,K\triangleleft E$, gives $H=K$.

\end{proof}

\section{Finiteness of the family of subgroups with a common commutator subgroup}

\begin{lemma}\label{lem:avoidance}
Let $D\le F$ and $d\in D\setminus D'$. Then there exists a finite set $T\subset F\setminus D$ with the following property:
\begin{equation}\label{eq:avoidance}
D\le H\le F,\quad H\cap T=\varnothing
\quad\Longrightarrow\quad d\notin H'.
\end{equation}
\end{lemma}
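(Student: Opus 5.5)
The plan is to use the chain homomorphism $\sigma$ of Section~3 together with the natural graph map $\Gamma_D\to\Gamma_H$ that exists whenever $D\le H$. By~\eqref{eq:sigma-kernel}, $d\notin H'$ is equivalent to $\sigma_H(d)\ne0$. So it suffices to ensure that the nonzero chain $\sigma_D(d)$ (nonzero because $d\notin D'$, again by~\eqref{eq:sigma-kernel} applied to $D$) survives when it is pushed down to $\Gamma_H$. The set $T$ will consist of exactly the elements whose presence in $H$ could cause two edges in the support of $\sigma_D(d)$ to collapse.

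First, fix a word $d=s_1\cdots s_\ell$ with $s_j\in X^{\pm1}$ (with $X$ a free basis of $F$). Let $u_j=s_1\cdots s_j$ for $0\le j\le\ell$. The closed path of $d$ at $D$ in $\Gamma_D$ visits the vertices $Du_0,\dots,Du_\ell$. For $D\le H$ there is a label-preserving graph morphism $p:\Gamma_D\to\Gamma_H$, $Dg\mapsto Hg$, sending edges to edges. The path of $d$ at $H$ in $\Gamma_H$ is the image under $p$ of the path at $D$. Hence
\[
\sigma_H(d)=p_*\sigma_D(d),
\]
where $p_*:C_1(\Gamma_D;\Z)\to C_1(\Gamma_H;\Z)$ is induced on positively oriented edges.

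Next, define
\[
T=\{\,u_iu_j^{-1}\;:\;0\le i,j\le\ell,\ Du_i\ne Du_j\,\}.
\]
This set is finite. It lies in $F\setminus D$, since $Du_i\ne Du_j$ means exactly $u_iu_j^{-1}\notin D$. Suppose $D\le H$ and $H\cap T=\varnothing$. Then $Hu_i=Hu_j$ forces $Du_i=Du_j$, so $p$ is injective on the finite vertex set $S=\{Du_0,\dots,Du_\ell\}$.

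Every edge in the support of $\sigma_D(d)$ is traversed by the path, so its origin lies in $S$. A positively oriented edge is determined by its origin and its label $x\in X$, and $p$ preserves labels. Therefore $p$ is injective on the set of edges with origin in $S$. Consequently $p_*$ maps the support of $\sigma_D(d)$ bijectively onto its image, preserving coefficients, and $p_*\sigma_D(d)\ne0$. Thus $\sigma_H(d)\ne0$, i.e.\ $d\notin H'$, which proves~\eqref{eq:avoidance}.

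The only step needing care is the identification $\sigma_H(d)=p_*\sigma_D(d)$ together with the claim that distinct edges with distinct or equal origins stay distinct. This reduces to the observation that edges are determined by (origin, label) in a Schreier graph. Everything else is bookkeeping.
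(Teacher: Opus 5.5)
Your proposal is correct and follows essentially the same argument as the paper. Both push $\sigma_D(d)$ forward along $p:\Gamma_D\to\Gamma_H$, take $T$ to be the quotients of prefixes lying in distinct $D$-cosets so that $p$ is injective on the path's vertices, and use that a positively oriented edge is determined by its origin and label. The only cosmetic difference is that you use all prefix pairs $u_iu_j^{-1}$ with $Du_i\ne Du_j$ rather than one chosen representative $b_i$ per coset, which changes nothing.
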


\begin{proof}
Fix a free basis $X$ of $F$ and choose a word
representing $d$. The corresponding closed path in $\Gamma_D$
starting at the vertex $D$ has a nonzero $1$-chain
$\sigma_D(d)$ by \eqref{eq:sigma-kernel}.

Let
\[
Db_1,\ldots,Db_m
\]
be all the distinct vertices of this path; for each of them, choose
a representative $b_i$ among the prefixes of the chosen word. Set
\[
T=\{\,b_i b_j^{-1}:1\le i,j\le m,\ i\ne j\,\}.
\]
Since the cosets $Db_1,\ldots,Db_m$ are pairwise distinct, we have
\[
T\cap D=\varnothing.
\]

Let $D\le H$ and $H\cap T=\varnothing$. The natural map
\[
p:\Gamma_D\longrightarrow\Gamma_H,
\qquad
Dg\longmapsto Hg,
\]
is injective on the vertices of the chosen path: the equality
$Hb_i=Hb_j$ would imply $b_i b_j^{-1}\in H$, contradicting
$H\cap T=\varnothing$.

Let
\[
p_\#:C_1(\Gamma_D;\mathbb Z)\longrightarrow C_1(\Gamma_H;\mathbb Z)
\]
denote the induced homomorphism on $1$-chains.

The map $p$ is also injective on the positively oriented edges
occurring in the chosen path. Indeed, both endpoints of each such
edge belong to the chosen set of vertices, and a positively
oriented edge of a Schreier graph is uniquely determined by its
initial vertex and label. Since $p$ preserves labels and orientations,
distinct edges in the support of $\sigma_D(d)$ have distinct images. Therefore,
\[
\sigma_H(d)=p_\#\sigma_D(d)\ne0.
\]
By \eqref{eq:sigma-kernel}, we obtain $d\notin H'$.
\end{proof}

\begin{lemma}\label{lem:finitefamily}
For every nontrivial subgroup $D\le F$, the set
\[
\Roots(D)=\{H\le F:H'=D\}
\]
is finite.
\end{lemma}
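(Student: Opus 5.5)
The plan is to combine Lemma~\ref{lem:avoidance} with the dichotomy of Lemma~\ref{lem:intersection}. Each $H\in\Roots(D)$ will be forced to contain an element of a fixed finite set $T$. Distinct members of $\Roots(D)$ will be shown to share no element outside $D$. Counting then bounds $|\Roots(D)|$ by $|T|$.

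First, if $\Roots(D)=\varnothing$ there is nothing to prove, so assume it is nonempty. The subgroup $D\ne1$ is a subgroup of $F$, hence a nontrivial free group. A nontrivial free group is not perfect, since its abelianization is free abelian of positive rank. Therefore $D'\ne D$, and we may choose an element $d\in D\setminus D'$.

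Apply Lemma~\ref{lem:avoidance} to $D$ and this $d$. This gives a finite set $T\subset F\setminus D$ satisfying \eqref{eq:avoidance}. Now let $H\in\Roots(D)$. Then $D=H'\le H$, so $D\le H\le F$. Moreover $d\in D=H'$, so the conclusion of \eqref{eq:avoidance} fails for $H$. Hence its hypothesis $H\cap T=\varnothing$ must fail, and $H$ contains some element $t_H\in T$. Since $T\cap D=\varnothing$, we have $t_H\in H\setminus D$.

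Next, take $H,K\in\Roots(D)$ with $H\ne K$. By Lemma~\ref{lem:intersection}, applied with $H'=K'=D\ne1$, we get $H\cap K=D$. Thus the sets $H\setminus D$ for $H\in\Roots(D)$ are pairwise disjoint. Consequently the map $H\mapsto t_H$ from $\Roots(D)$ to $T$ is injective, because equal images would be a common element of $H\setminus D$ and $K\setminus D$. Hence $|\Roots(D)|\le|T|<\infty$.

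I do not expect a genuine obstacle here, since the substantive work is already contained in Lemmas~\ref{lem:avoidance} and~\ref{lem:intersection}. The only points to check are that $D\ne D'$, which is what makes the choice of $d$ possible, and that $T$ is disjoint from $D$, which is what puts each $t_H$ in $H\setminus D$.
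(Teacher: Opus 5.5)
Your proposal is correct and follows the paper's proof essentially verbatim. You choose $d\in D\setminus D'$ (possible because $D$ is a nontrivial free group), take $T\subset F\setminus D$ from Lemma~\ref{lem:avoidance} so that every member of $\Roots(D)$ meets $T$, and use Lemma~\ref{lem:intersection} to make these intersections with $T$ pairwise disjoint, which gives $|\Roots(D)|\le|T|$.
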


\begin{proof}
By the Nielsen--Schreier theorem~\cite[\S2.4]{MKS}, the subgroup $D$ is free;
since $D\ne1$, we have $D/D'\ne1$. Choose $d\in D\setminus D'$ and a finite set
$T$ as in Lemma~\ref{lem:avoidance}.

If $H\in\Roots(D)$, then $D=H'\le H$ and $d\in H'$.
By \eqref{eq:avoidance}, we have $H\cap T\ne\varnothing$.

If $H,K\in\Roots(D)$ and $H\ne K$, then Lemma~\ref{lem:intersection}
gives $H\cap K=D$. Since $T\cap D=\varnothing$, the sets
$H\cap T$, $H\in\Roots(D)$, are pairwise disjoint. Hence
\[
|\Roots(D)|\le |T|<\infty.
\]
\end{proof}

\section{Proof of the main theorem}

\begin{proof}[Proof of Theorem~\ref{thm:main}]
Let $D=H'=K'\ne1$. By Lemma~\ref{lem:finitefamily}, the family
$\Roots(D)$ is nonempty and finite. Set
\[
E=\Norm_F(D).
\]
Every subgroup $S\in\Roots(D)$ is contained in $E$, since
$S$ normalizes its commutator subgroup $S'=D$. The group $E$ acts
by conjugation on $\Roots(D)$: for $g\in E$, we have
\[
(g^{-1}Sg)'=g^{-1}Dg=D.
\]
Let
\[
L=\bigcap_{S\in\Roots(D)}\Norm_E(S)
\]
be the kernel of this action. Then $L\triangleleft E$ and $[E:L]<\infty$.
Set $m=[E:L]$. By Lagrange's theorem,
\begin{equation}\label{eq:powers-in-L}
g^m\in L
\qquad (g\in E).
\end{equation}

Suppose that $H\ne K$. Then Lemma~\ref{lem:intersection} gives
\begin{equation}\label{eq:HKintersection}
H\cap K=D.
\end{equation}
Choose $k\in K\setminus D$ and set $q=k^m$. Since
$K/D=K/K'$ is a free abelian group, $q\notin D$. By
\eqref{eq:powers-in-L}, we have
\begin{equation}\label{eq:q-in-L}
q\in K\cap L,\qquad q\notin D.
\end{equation}
For $h\in H$, set $p=h^m$. Then $p\in H\cap L$.
Since $q\in L$, the element $q$ normalizes $H$, and hence
$[p,q]\in H$. On the other hand, $p\in L$ normalizes $K$, and
$q\in K$, so $[p,q]\in K$. By
\eqref{eq:HKintersection}, we obtain
\[
[h^m,q]=[p,q]\in D.
\]

Conjugation by $q$ induces an automorphism $\alpha$ of the group
\[
A=H/D=H/H'.
\]
In additive notation, the relation $[h^m,q]\in D$ means that
\[
m\alpha(a)=\alpha(ma)=ma
\qquad (a\in A).
\]
Since $A$ is a free abelian group, it is torsion-free.
Thus $\alpha(a)=a$ for all $a\in A$, that is,
\begin{equation}\label{eq:trivial-action}
[H,q]\le D=H'.
\end{equation}

Consider
\[
E_0=\langle H,q\rangle.
\]
The group $E_0$ is free and noncyclic, since $H'\ne1$.
As $q\in L$, we have $H\triangleleft E_0$. Therefore,
Proposition~\ref{prop:AL}(a), applied to $H\triangleleft E_0$,
together with \eqref{eq:trivial-action}, gives $q\in H$.
But $q\in K$, so \eqref{eq:HKintersection} yields
$q\in D$, contradicting \eqref{eq:q-in-L}. Hence
$H=K$.
\end{proof}

\begin{corollary}[Normalizers]
For every noncyclic subgroup $H$ of a free group $F$, we have
\[
\Norm_F(H')=\Norm_F(H).
\]
\end{corollary}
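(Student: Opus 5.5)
We prove the two inclusions separately; both follow quickly from Theorem~\ref{thm:main} and the conjugation formula $(f^{-1}Hf)'=f^{-1}H'f$, which holds for every $f\in F$.

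\emph{The inclusion $\Norm_F(H)\le\Norm_F(H')$.} If $f$ normalizes $H$, then
\[
f^{-1}H'f=(f^{-1}Hf)'=H',
\]
so $f\in\Norm_F(H')$. This uses nothing about free groups; it only says that the derived subgroup is characteristic.

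\emph{The inclusion $\Norm_F(H')\le\Norm_F(H)$.} Let $f\in\Norm_F(H')$ and put $K=f^{-1}Hf\le F$. By the conjugation formula,
\[
K'=f^{-1}H'f=H'.
\]
It remains to check $H'\ne1$. By the Nielsen--Schreier theorem $H$ is free, and by hypothesis it is noncyclic, so it has rank at least $2$. A free group of rank at least $2$ is nonabelian, hence $H'\ne1$. Theorem~\ref{thm:main} therefore applies to the pair $H,K$ and gives $K=H$, that is, $f^{-1}Hf=H$ and $f\in\Norm_F(H)$. This is exactly the argument used in the derivation of Corollary~\ref{cor:F15}, applied here elementwise to $\Norm_F(H')$ instead of to all of $F$.

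The only point needing any care is $H'\ne1$. It is also why the noncyclic hypothesis is needed: if $H$ is cyclic, then $H'=1$, so $\Norm_F(H')=F$, while $\Norm_F(H)$ is in general a proper subgroup.
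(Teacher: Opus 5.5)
Your proof is correct and follows the paper's argument: you get $\Norm_F(H)\le\Norm_F(H')$ from $(g^{-1}Hg)'=g^{-1}H'g$, and the reverse inclusion by applying Theorem~\ref{thm:main} to $H$ and $g^{-1}Hg$, which have the same nontrivial derived subgroup $H'$. Your explicit check that $H'\ne1$ (a noncyclic free group is nonabelian) spells out a step the paper takes for granted.
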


\begin{proof}
The inclusion $\Norm_F(H)\le \Norm_F(H')$ is clear: if $g$ normalizes $H$, then
\[
g^{-1}H'g=(g^{-1}Hg)'=H'.
\]
If $g\in \Norm_F(H')$, then
\[
(g^{-1}Hg)'=H'\ne1,
\]
and Theorem~\ref{thm:main} gives $g^{-1}Hg=H$.
\end{proof}

\end{document}